\newtheorem{thm}{Theorem}[section] \newtheorem{lem}[thm]{Lemma}   
\theoremstyle{definition}
\theoremstyle{definition}
\newtheorem{remark}[thm]{Remark}
\DeclareMathOperator{\Span}{Span}
\DeclareMathOperator{\Cod}{Cod}
\DeclareMathOperator{\Dec}{Dec}
\begin{document}
\title{{\large{\textbf{SKEW MONOIDALES in SPAN}}}}
\author{Jim Andrianopoulos}
\date{\today}
\maketitle
\abstract{This article consists of an interesting characterisation of a skew monoidale in the monoidal bicategory $\Span$. After discussing the shift or decalage functor on simplicial sets we characterise these skew monoidales as categories $\mathbb C$ together with a functor $R \colon \Dec(\mathbb C) \longrightarrow \mathbb C$ which satisfies two conditions and give an example where the unit of the skew monoidale is not of a restricted type.}

\section{Introduction}
A general classification of skew monoidales in a monoidal bicategory in terms of simplicial maps from the Catalan simplicial set into the nerve of the monoidal bicategory is shown in \cite{CatSet}. We will consider a skew monoidale in the particular monoidal bicategory $\Span$  to examine a question arising from \cite{SkMon}.

Since their introduction by B{\'e}nabou in \cite{Bena}, $\Span$ and the $\Span$ construction are ubiquitous in higher category theory. This is mainly due to the fact that a category can be regarded as a monad in the bicategory of spans $\Span$, and various generalisations. However, what interests us is $\Span$ not just as a bicategory but as a monoidal bicategory made monoidal using the cartestian product of sets.  Skew monoidales (= skew pseudomonoids) were defined by Lack and Street in \cite{SkMon}, where they also show that quantum categories are skew monoidal objects, with a certain unit, in an appropriate monoidal bicategory. This contains as a special case the fact that categories are equivalently skew monoidales $C$ in the monoidal bicategory $\Span$ with tensor product given by
$$C\times C \stackrel{(s,t)}\longleftarrow E \stackrel{t}\longrightarrow C \ $$ for some set $E$,
and where the unit is assumed to be of the form
$$I \stackrel{!}\longleftarrow C \stackrel{1}\longrightarrow C \ ;$$ 
where $I$ is a terminal object in \textbf{Set}.
We characterise skew monoidales in $\Span$ without any restrictions on the unit of the skew monoidale. This means that the tensor product for the skew monoidale $C$ is given by
$$C\times C \stackrel{(s,r)}\longleftarrow E \stackrel{t}\longrightarrow C \ $$ for some set $E$, 
and  where the unit has the form
$$I \stackrel{!}\longleftarrow U \stackrel{j}\longrightarrow C \ .$$
This characterisation follows some lengthy but not difficult calculations, which are made easier using the concrete form a pullback takes in \textbf{Set}. We recover the fact in \cite{SkMon}, that categories are equivalently skew monoidales in $\Span$ with a unit of a certain restricted type. Section 5.2 collects the extra structure obtained from a skew monoidale in the form of a functor $R$ with some interesting properties.
We finish the article with a simple example of a skew monoidale (actually just a monoidale) in $\Span$ whose unit is not of the restricted type previously considered. 

\section{Skew Monoidales}
Instead of defining a skew monoidale in a general monoidal bicategory we define them in a Gray monoid. So in what follows we write as if $\cal B$ is a 2-category.   
 
Let $\cal B$ be a Gray monoid; see \cite{Day} for an explicit definition. Note that for 1-cells $f : A \longrightarrow A'$ and $g : B \longrightarrow B'$ in a Gray monoid, the only structural 2-cells are the invertible 2-cells of the form 
\begin{align*}
\xymatrix {
A\otimes B \ar[rr]^{1\otimes g} \ar[d]_{f \otimes 1} \ar@{}[drr]|{\cong} && A\otimes B' \ar[d]^{f\otimes 1}
\\ A'\otimes B \ar[rr]_{1\otimes g} && A'\otimes B'}
\end{align*} 
or tensors and composites thereof.
In this section we denote them with the symbol $\cong$ as above. These 2-cells satisfy some axioms which we do not list. We write $I$ for the unit object of the Gray monoid.

A {\em skew monoidal} structure on an object $A$ in $\cal B$ consists of morphisms $p \colon A\otimes A \longrightarrow A$ and $j \colon I \longrightarrow A$ in $\cal B$, respectively called the {\em tensor product} and {\em unit}, equipped with the following 2-cells, denoted by $\alpha$, $\lambda$ and $\rho$,respectively called the {\em associativity},{\em left unit} and {\em right unit constraints} 
\begin{equation*}\label{smonoidale1}
\begin{aligned}
\xymatrix{
 A\otimes A\otimes A \ar[rr]^{1\otimes p}_{~}="1" \ar[d]_{p\otimes 1}  && A\otimes A \ar[d]^p
  \\  A\otimes A \ar[rr]_p^{~}="2"   \ar@{=>}"2";"1"^\alpha && A }
\end{aligned}
 \end{equation*} 
$$\vbox{\xymatrix {
A\otimes A \ar[ddrr]_p && A \ar[ll]_{j\otimes 1} \ar[dd]^1_{~}="1"
\\
\\ && A
\ar@{=>}"1"+/va(-199)+3pc/;"1"_{\lambda}
}}
\quad   \quad
\vbox{\xymatrix {
&& A\ar[dd]^{1\otimes j}
\\
\\ A \ar[rruu]^1_{~}="1" && A\otimes A \ar[ll]^p
\ar@{<=}"1"-/va(-199)+3pc/;"1"_{\rho}
}}$$ 
subject to the following five axioms 
\begin{equation}\label{mpent}
\vcenter{
\xymatrix@!=0.9pc{
&& A\otimes A\otimes A \ar[dd]_{1\otimes p}^(0.7){~}="2" \ar[drr]^{p\otimes 1}_(0.7){~}="1" 
\\ A\otimes A\otimes A\otimes A \ar[urr]^{p\otimes 1\otimes 1} \ar[dd]_{1\otimes 1\otimes p} \ar@{}[drr]|{\cong} &&&&
A\otimes A \ar[dd]^p
\\ && A\otimes A\ar[drr]^p_{~}="3" 
\\ A\otimes A\otimes A \ar[drr]_{1\otimes p}^{~}="4" 
\ar[urr]^{p\otimes 1} &&&& A
\\ && A\otimes A \ar[urr]_p
\ar@{=>}"1";"2"_{\alpha}
\ar@{=>}"3";"4"_{\alpha}
  }
}
  = 
\vcenter{
\xymatrix@!=0.9pc{
&& A\otimes A\otimes A \ar[drr]^{p\otimes 1}_{~}="3"
\\ A\otimes A\otimes A\otimes A \ar[urr]^{p\otimes 1\otimes 1} \ar[dd]_{1\otimes 1\otimes p}^{~}="4" 
\ar[drr]_{1\otimes p\otimes 1}^{~}="1" &&&&
A\otimes A \ar[dd]^p_{~}="5" 
\\ && A\otimes A\otimes A \ar[dd]^{1\otimes p}_{~}="2" \ar[urr]_{p\otimes 1}
\\ A\otimes A\otimes A \ar[drr]_{1\otimes p} &&&& A
\\ && A\otimes A \ar[urr]_p_{~}="6"
\ar@{=>}"3";"1"_{{\alpha}\otimes 1}
\ar@{=>}"2";"4"_{1\otimes {\alpha}}
\ar@{=>}"5";"6"+/va(135)+3.82pc/_{\alpha}}}
\end{equation}

\begin{align}
\vcenter{ \xymatrix {
A\otimes A \ar[d]_{1\otimes p} \ar[r]^{j\otimes 1\otimes1} \ar@{}[dr]|{\cong}
 & A\otimes A\otimes A \ar[d]_{1\otimes p}^{~}="2" \ar[r]^{p\otimes 1}  & A\otimes A \ar[d]^p_{~}="1"
\\ A \ar[r]^{j\otimes 1} \ar@/_2pc/[rr]_1^{~}="3" & A\otimes A \ar[r]^p & A
\ar@{=>}"1";"2"_{\alpha}
\ar@{<=}"3";"3"+/va(90)+1.1pc/^{\lambda} 
} }
& \quad=\quad
\vcenter{ \xymatrix {
A\otimes A \ar[d]_{1\otimes p} \ar[r]^{j\otimes 1\otimes1} \ar@/_1.7pc/[rr]_1^{~}="1"
 & A\otimes A\otimes A  \ar[r]^{p\otimes 1}  & A\otimes A \ar[d]^p
\\ A \ar@/_2pc/[rr]_1 \ar@{}[rr]^{\cong}  && A
\ar@{<=}"1";"1"+/va(90)+1.1pc/^{\lambda\otimes 1}
} } \label{mleft} \\
\vcenter{
\xymatrix {
A\otimes A \ar[d]_{p\otimes 1} \ar[r]^{1\otimes 1\otimes j} \ar@{}[dr]|{\cong}
 & A\otimes A\otimes A \ar[d]_{p\otimes 1} \ar[r]^{1\otimes p}_{~}="2"  & A\otimes A \ar[d]^p
\\ A \ar[r]_{1\otimes j} \ar@/_2pc/[rr]_1^{~}="3" & A\otimes A \ar[r]_p^{~}="1" & A
\ar@{=>}"1";"2"_{\alpha}
\ar@{=>}"3";"3"+/va(90)+1.1pc/^{\rho} 
} }
& \quad=\quad 
\vcenter{
\xymatrix {
A\otimes A \ar[d]_{p\otimes 1} \ar[r]^{1\otimes 1\otimes j} \ar@/_1.7pc/[rr]_1^{~}="1"
 & A\otimes A\otimes A  \ar[r]^{1\otimes p}  & A\otimes A \ar[d]^p
\\ A \ar@/_2pc/[rr]_1 \ar@{}[rr]^{\cong} && A
\ar@{=>}"1";"1"+/va(90)+1.1pc/^{1\otimes {\rho}}
} }
\label{mright} \\
\vcenter{
  \xymatrix {
A\otimes A \ar[rr]^1_{~}="1" \ar[dr]_{1\otimes j\otimes 1} \ar@/_1.6pc/[ddr]_1^{~}="2" && A\otimes A \ar[dd]^p_{~}="3"
\\ & A\otimes A\otimes A \ar[d]_{1\otimes p}^{~}="4" \ar[ru]_{p\otimes 1}
\\ & A\otimes A \ar[r]_p & A
\ar@{=>}"1";"1"+/va(-90)+1.3pc/^{{\rho}\otimes 1}
\ar@{<=}"2";"2"+/va(-315)+1.3pc/_{1\otimes {\lambda}}
\ar@{=>}"3";"4"^{\alpha}
} } 
&\quad=\quad 
\vcenter{
\xymatrix {
A\otimes A \ar[rr]^1 \ar[dd]_1 \ar@{}[ddrr]|{\equiv} && A\otimes A \ar[dd]^p
\\
\\ A\otimes A \ar[rr]_p && A
} }
\label{mmid} \\
\vcenter{
  \xymatrix {
I \ar[rr]^j \ar[d]_j \ar@{}[drr]|{\cong} && C \ar[d]^{1\otimes j} \ar@/^0.9pc/[drr]^1_{~}="1"
\\ C \ar[rr]_{j\otimes 1} \ar@/_1.7pc/[rrrr]_1_{~}="2" && C\otimes C \ar[rr]_p && C
\ar@{=>}"1";"1"+/va(-135)+1.3pc/^{\rho}
\ar@{<=}"2";"2"+/va(90)+1.3pc/^{\lambda}
} } 
&\quad=\quad 
\vcenter{
\xymatrix {
I \ar[rr]^j \ar[d]_j \ar@{}[drr]|{\equiv} && C \ar[d]^1
\\ C \ar[rr]_1 && C
} } \label{muu}
\end{align} 

An object $A$ of $\cal B$ equipped with such a skew monoidal structure is called a {\em skew monoidale} in $\cal B$;

A skew monoidale in the cartesian monoidal 2-category \textbf{Cat} of categories, functors and natural transformations is a skew monoidal category.

\section{Span as a Monoidal Bicategory}
In this chapter we are interested in the case where $\cal B$ is $\Span$. We first remind the reader of some details of $\Span$.

The objects of $\Span$ are those of \textbf{Set} ; so $A$, $B$, $C$ ..... are sets.
We denote the terminal object in \textbf{Set} by $1$ and the unique arrow into it by $!$.

An arrow $\xymatrix{r \colon A \ar[r]|(.56){\vert} & B}$ is a span $r = (f , R , g)$ in \textbf{Set}, as in $(a)$, where composition of these arrows is by pullback (pullback along $g$ and $h$), as in $(b)$, and the identity arrow is the span $(c)$ below.
$$\vbox{\xymatrix{
(a) & R \ar[dl]_f \ar[dr]^g
\\ A && B }}
\qquad  \qquad
\vbox{\xymatrix {(b)
& R \ar[dl] \ar[dr]^g && T \ar[dl]_h \ar[dr] 
\\ A && B && C }}
\qquad  \qquad
\vbox{\xymatrix{(c)
& C \ar[dl]_1 \ar[dr]^1
\\ C && C }}$$
A 2-cell from $(w,R,x)$ to $(f,S,g)$ is a map $\tau \colon  R \longrightarrow S $ in \textbf{Set} such that the following commutes  
\begin{align*} 
\xymatrix {
& R \ar[ddl]_w \ar[ddr]^x \ar[d]^{\tau}
\\ & S \ar[dr]_g \ar[dl]^f
\\ A && B}
\end{align*}
As \textbf{Set} is a category with finite products as well as pullbacks (in the presence of a terminal object, finite products can be obtained as a special case of pullbacks) then the bicategory $\Span$ has a  monoidal product on it induced by the cartesian product of sets.
 
To calculate a left whiskering, such as in the following  diagram (on the left), we first form the pullbacks of $f$ and $w$ along $v$, then we use the fact that $f\tau = w$ and $v1 = v$ to construct the dotted arrow in the diagram on the right.
$$\vbox{\xymatrix {
&&& R \ar[ddl]_w \ar[ddr]^x \ar[d]^{\tau}
\\ & E \ar[dr]^v \ar[dl]_u && S \ar[dr]_g \ar[dl]^f
\\ C && A && B }}
\qquad 
\vbox{\xymatrix { && E\times_A R \ar[drr] \ar[dll] \ar@{..>}[d]
\\ E  \ar@/_1.1pc/[dd]_u \ar@{=}[dr]_1 && E\times_A S \ar[dr] \ar[dl] && R  \ar@/^1.1pc/[dd]^x \ar[dl]^{\tau}
\\ & E \ar[dr]_v \ar[dl]^u && S \ar[dr]_g \ar[dl]^f
\\ C  && A && B}}$$

\section{Notation and Calculations}

The motivation for this section is from \cite{SkMon} where skew monoidales in $\Span$ with a unit of the form $\xymatrix{(!,C,1) \colon 1 \ar[r]|(.65){\vert} & C}$ are shown to be equivalent to categories. Here we give a characterisation of a general skew monoidale in $\Span$.

Consider a skew monoidale in $\Span$ with underlying object the set $C$.

\textbf{The 1-cells of a Skew Monoidale:}

The tensor $\xymatrix{ p \colon  C\times C \ar[r]|(.65){\vert} & C}$ has the form
\begin{align*}
\xymatrix{
& E \ar[dl]_{(s,r)} \ar[dr]^t
\\ **[r]C\times C && C}
\end{align*}
So for $f \in E$ we will record this data as $\xymatrix{ {s(f)} \ar@{-->}[r]^f & {t(f)}}$ and $r(f) \in C$.

The unit $\xymatrix{ j \colon  1 \ar[r]|(.53){\vert} & C}$ has the form 
\begin{align*}
\xymatrix{
& U \ar[dl]_{!} \ar[dr]^j
\\ 1 && C}
\end{align*}
So for $u \in U$ we will record this data as $j(u) \in C$.

Given a skew monoidale, with its unit having the restricted form $\xymatrix{(!,C,1) \colon 1 \ar[r]|(.65){\vert} & C}$, it will become evident when dealing with the general case below, that this forces the first span to be of the form 
$C\times C \stackrel{(s,t)}\longleftarrow E \stackrel{t}\longrightarrow C \ $, and that it defines a category with $E$ as its set of arrows. Conversely, given a category $C = (C_1,C_0,1,s,t,\circ)$, we construct the following two spans: $C_0\times C_0 \stackrel{(s,t)}\longleftarrow C_1 \stackrel{t}\longrightarrow C_0 \ $, and $1 \stackrel{!}\longleftarrow C_0 \stackrel{1}\longrightarrow C_0 \ $. The 2-cell structure making this category into a skew monoidale comes from the composition and identity arrows of the category, with the skewness arising from the non-symmetric nature of the first span.

\textbf{The 2-cells of a Skew Monoidale:}

What is now required is a long and often repetitive calculation with, when we include the equations between the 2-cells, sixteen pullback constructions in \textbf{Set}; so we will present enough of it to introduce and justify the supporting notation that will form our input for a further characterisation.

\textbf{For the 2-cell} $\lambda \colon  p(j \times 1) \Longrightarrow 1$
we need to consider the following composite
\begin{align*}
\xymatrix{
& U \times C \ar[dl]_{! \times 1} \ar[dr]^{j \times 1} && E \ar[dl]_{(s,r)} \ar[dr]^t
\\ **[r]1 \times C &&  C \times C && C}
\end{align*}  
First we need to form the following pullback
\begin{equation}\label{pull}
\xymatrix{ P \ar[r]^q \ar[d]_p & E \ar[d]^s
\\ U \ar[r]_j & C}
\end{equation}
then the required composite is
\begin{align*} 
\xymatrix{ && P \ar[dl]_{(p,rq)} \ar[dr]^q
\\ & U \times C \ar[dl]_{! \times 1} \ar[dr]^{j \times 1} && E \ar[dl]_{(s,r)} \ar[dr]^t
\\ 1 \times C &&  C \times C && C}
\end{align*} 
so we finally have for the 2-cell $\lambda$, a function which we also denote by $\lambda$, such that the following diagram commutes,
\begin{align*}
\xymatrix {& P \ar[ddl]_{rq} \ar[ddr]^{tq} \ar[d]^{\lambda}
\\ & C \ar[dr]_1 \ar[dl]^1
\\ C && C}
\end{align*}
it can only exist if $rq = tq$ and is then given as a morphism in \textbf{Set} by the common value
\begin{equation}\label{Dom}
\begin{aligned}
rq = tq.
\end{aligned}
\end{equation}
As we are in \textbf{Set} we can write $P$ as $P = \big\{(u,f)|  u \in U , f \in E, \hspace{.2cm} j(u) = s(f) \big\}$ with $p(u,f) = u$ and $q(u,f) = f$ as the projections.
With our notation, the elements in $P$ look like 
$\xymatrix { j(u) \ar@{-->}[r]^f & y}$.
We can now record the effect of $\lambda$ as :
$\xymatrix { j(u) \ar@{-->}[r]^f & y  \ar@{|->}[rr]^(0.40){\lambda} && y = r(f). }$
Thus the existence of $\lambda$ implies that if 
$\xymatrix { j(u) \ar@{-->}[r]^f & y}$ then $y = r(f)$, and the map itself sends $(u,\xymatrix { j(u) \ar@{-->}[r]^f & y})$ to $y$.

In the case of a category (that is, the case where $U$ is $C$ and the unit is of the form  $1 \stackrel{!}\longleftarrow C \stackrel{1}\longrightarrow C \ $) then $P = E = C_1$ and $j = 1$ forces $r = t$, so $\lambda$ is just $t$ .

\textbf{For the 2-cell} $\rho \colon  1 \Longrightarrow p(1 \times j)$ 
we first need to construct the following pullback
\begin{align*}
\xymatrix{ & B \ar[d]_m \ar[r]^k & E \ar[d]^r \\ & U \ar[r]_j & C }
\end{align*}
In the diagram below 
\begin{equation}\label{Rho}
\xymatrix{ && C \ar[d]^{\rho} \ar@/_1.8pc/[dddll]_1 \ar@/^1.2pc/[ddr]_{\phi} \ar@/_1.2pc/[ddl]^{(1,\psi)}
\ar@/^1.8pc/[dddrr]^1
\\ && B \ar[dr]_k \ar[dl]^{(sk,m)}
\\ & C \times U \ar[dr]_{1 \times j} \ar[dl]^{proj_1} && E \ar[dr]_t \ar[dl]^{(s,r)}
\\ C  && C \times C  && C}
\end{equation}
the square is the pullback involved in the composite $p(1\times j)$, so to give $\rho \colon  1 \Longrightarrow p(1 \times j)$ is equivalently to give $\phi \colon C \longrightarrow E$ and $\psi \colon C \longrightarrow U$ satisfying $t\phi = 1$, $s\phi = 1$, and $r\phi = j\psi$. 
We record for later use that
\begin{equation*}
\begin{aligned}
r\phi = j\psi.
\end{aligned}
\end{equation*}
As we are in \textbf{Set}, $B = \big\{(u,f)|  u \in U , f \in E, \hspace{.2cm}  j(u) = r(f) \big\}$ with $m(u,f) = u$ and $k(u,f) = f$ as the projections.
With respect to our notation, the elements in $B$ look like 
$ (j(u) = r(f) , \xymatrix{ x \ar@{-->}[r]^f & y} )$
so we record the effect of $\phi$ as
\begin{equation*} 
\xymatrix {x\in C  \ar@{|->}[r]^(.70){\phi} & } ( \xymatrix{ x \ar@{-->}[r]^{\phi_x} & x} )
\end{equation*} 
then $\psi_{x} \in U$ satisfies $j({\psi_x}) =  r(\phi_x)$.

Note that in the case of a category then $B = E = C_1$ and so $\rho$ is just the identity.

\textbf{For the 2-cell} $\alpha \colon  p(p \times 1) \Longrightarrow p(1 \times p)$
we need the following two pullbacks  
$$ \vbox{ \xymatrix{ & X \ar[d]_h \ar[r]^l & E \ar[d]^s \\ & E \ar[r]_t & C}}
\qquad  \qquad
\vbox{ \xymatrix{ & Y \ar[d]_e \ar[r]^y & E \ar[d]^r \\ & E \ar[r]_t & C}} $$
The objects $X$ and $Y$ will appear as the vertex of the spans $p(p\times 1)$ and $p(1\times p)$, respectively.

In the diagram below 
\begin{equation}\label{Alp}
\begin{aligned}
\xymatrix{ && X \ar[d]^{\alpha} \ar@/_1.8pc/[dddll]_{(sh,rh,rl)} \ar@/^1.2pc/[ddr]_{\delta} \ar@/_1.2pc/[ddl]^{(sh,\tau)}
\ar@/^1.8pc/[dddrr]^{tl}
\\ && Y \ar[dr]_y \ar[dl]^{(sy,e)}
\\ & C \times E \ar[dr]_{1 \times t} \ar[dl]^{1\times (s,r)} && E \ar[dr]_t \ar[dl]^{(s,r)}
\\ C\times C\times C  && C\times C  && C}
\end{aligned}
\end{equation}
the square is the pullback involved in the composite $p(1\times p)$, so to give $\alpha \colon  p(p\times 1) \Longrightarrow p(1 \times p)$ is equivalently to give $\tau \colon X \longrightarrow E$ and $\delta \colon X \longrightarrow E$ satisfying $t\delta = tl$, $s\delta = sh$,$s\tau = rh$, $r\tau = rl$, and $r\delta = t\tau$. 
 
As we are in \textbf{Set},  $X = \big\{(f,g)|  f , g \in E ;  t(f) = s(g) \big\}$ with $l(f,g) = g$ and $h(f,g) = f$ as the projections. Similarly, $Y = \big\{(f,g)|  f , g \in E ;  t(f) = r(g) \big\}$ with $y(f,g) = g$ and $e(f,g) = f$ as its projections. So with respect to our notation, elements of $X$ look like
$\xymatrix {x \ar@{-->}[r]^f & y \ar@{-->}[r]^g & z}$ and elements of $Y$ look like ($\xymatrix {x \ar@{-->}[r]^f & r(g)}$ , $\xymatrix{ y \ar@{-->}[r]^g & z}$) with $r(f)$, $r(g)$ $\in C$
and we record the effect of $\delta$ as
\begin{align*} 
\xymatrix {x \ar@{-->}[r]^f & y \ar@{-->}[r]^g & z  \ar@{|->}[rr]^{\delta} && } \xymatrix{ x \ar@{-->}[rr]^{gf} && z} 
\end{align*} 
and $\tau$ as
\begin{align*} 
\xymatrix {x \ar@{-->}[r]^f & y \ar@{-->}[r]^g & z  \ar@{|->}[rr]^{\tau} && } \xymatrix{ r(f) \ar@{-->}[rr]^{g^f} && r(gf) } 
\end{align*}

 with
 $r(g^f)$ = $r(g)$ in $C$.
 
\textbf{Note} that $\delta$ gives us a map from $x$ to $z$ which we have called $gf$. We want to interpret the set $E$ as a set of arrows and $gf$ as a composite (with $\phi_x$ as an identity), indeed, that this is the composite in a category will be shown below. The map $\tau$ gives us a map from $r(f)$ to $r(gf)$ which we have called $g^f$. This map will form the basis of our characterisation for the resulting ``extra'' structure given on the category. 

We now consider the \textbf{equations between the 2-cells} and just do one calculation to give the reader an indication of how the final relations are obtained.  
Consider the left-hand side of equation \eqref{mpent} and the whiskering
\begin{align*}  
\xymatrix {C \times C\times C \times C \ar[rr]^{p \times 1\times 1} && C \times C\times C \ar[rr]^{p \times 1} \ar[d]_{1 \times p}^{~}="2" && C \times C \ar[d]^p_{~}="1" 
\\ && C \times C \ar[rr]_p && C
\ar@{=>}"1";"2"_{\alpha}}
\end{align*} 
For this we need to compose
\begin{align*} 
\xymatrix {&&& X \ar@/_1.1pc/[ddl]_{(sh,rh,rl)} \ar@/^1.1pc/[ddrr]^{tl} \ar[d]|-{(\tau ,\delta)}
\\ & E \times C\times C \ar[dr]^{t \times 1\times 1} \ar[dl]_{(s,r) \times 1\times 1} && Y \ar[drr]_{ty} \ar[dl]^{(sy,se,re)}
\\ C \times C\times C\times C && C \times C \times C &&& C}
\end{align*}
First form the pullbacks
$$\vbox{ \xymatrix{ & Q \ar[d]_{y'} \ar[r]^{l'} & Y \ar[d]^y \\ & X \ar[r]_l & E}}
\qquad  \qquad 
\vbox{ \xymatrix{ & W \ar[d]_{h'} \ar[r]^w & X \ar[d]^h \\ & X \ar[r]_l & E}} $$
then we have the following pullbacks 
$$\vbox{ \xymatrix{ Q \ar[rr]^{l'} \ar[d]_{(hy',sel',rel')} && Y \ar[d]^{(sy,se,re)}
\\ E \times C\times C \ar[rr]_{t \times 1\times 1} && C \times C\times C}}  
\qquad  \qquad 
\vbox{ \xymatrix{W \ar[rr]^w \ar[d]_{(hh',rhw,rlw)}  && X \ar[d]^{(sh,rh,rl)}
\\ E \times C\times C \ar[rr]_{t \times 1\times 1} && C \times C\times C}} $$ 
and so form
\begin{align*} 
\xymatrix { && W \ar[drrr]^w \ar[dll]_{(hh',rhw,rlw)} \ar[d]^{\mu}
\\ E \times C\times C \ar@/_1.1pc/[dd]_{(s,r) \times 1\times 1} \ar@{=}[dr]_1 && Q \ar[dr]^{l'}	 \ar[dl]_{(hy',sel',rel')} &&& X  \ar@/^1.1pc/[dd]^{tl} \ar@ {=>}[dll]|-{(\tau ,\delta)}
\\ & E \times C\times C \ar[dr]_{t \times 1\times 1} \ar[dl]^{(s,r) \times 1\times 1} && Y \ar[drr]_{ty} \ar[dl]^{(sy,se,re)}
\\ C \times C\times C\times C && C \times C \times C &&& C}
\end{align*} 

As before, to give the map $\xymatrix{p(p\times 1)(p\times 1\times 1) \ar[rr]^{\alpha(p\times 1\times 1)} && p(1\times p)(p\times 1\times 1)}$ is equivalently to give $\gamma \colon W \longrightarrow E$ and $\epsilon \colon W \longrightarrow Y$ as in the diagram below. 
\begin{align*}  
\xymatrix { && W \ar[drrr]^w \ar[dll]_{(hh',rhw,rlw)} \ar[d]^{\mu} \ar@/^1.2pc/[ddr]^{\epsilon} \ar@/_1.2pc/[ddl]^{\gamma}
\\ E \times C\times C \ar@/_1.1pc/[dd]_{(s,r) \times 1\times 1} \ar@{=}[dr]_1 && Q \ar[dr]_{l'} \ar[dl]^{(hy',sel',rel')} &&& X  \ar@/^1.1pc/[dd]^{tl} \ar@ {=>}[dll]|-{(\tau ,\delta)}
\\ & **[r]E \times C\times C \ar[dr]_{t \times 1\times 1} \ar[dl]^{(s,r) \times 1\times 1} && Y \ar[drr]_{ty} \ar[dl]^{(sy,se,re)}
\\ C \times C\times C\times C && C \times C \times C &&& C}
\end{align*}
From this diagram we now establish some relationship between $(\gamma,\epsilon)$ and $(\tau,\delta)$. We get $\gamma = hh'$ and $\epsilon = (\tau,\delta)w = ({\tau}w,{\delta}w)$.
Now writing these as functions into just the set $E$ we recall the previous pullbacks we had constructed and consider the following diagram
\begin{align*} 
\xymatrix{ && W \ar[d]_{\mu} \ar@/_1.8pc/[dddll]_{\gamma = hh'} \ar@/^1.2pc/[ddr]^{\epsilon}
\ar@/^1.8pc/[dddrr]^{{\tau}w = e{\epsilon}}
\ar@/^1.3pc/[ddd]_(.65){y{\epsilon} ={\delta}w}
\\ && Q \ar[dr]^{l'} \ar[dl]_{y'}
\\ & X \ar[dr]_l \ar[dl]_h && Y \ar[dr]^e \ar[dl]^y
\\ E \ar[dr]_t && E \ar[dl]^s \ar[dr]_r && E \ar[dl]^t
\\ & C && C}
\end{align*} 
From this diagram we have $\gamma = hh'$ , $y \epsilon = y(\tau w,\delta w ) = \delta w$ and $e \epsilon = e(\tau w,\delta w ) = \tau w$.

As we are in \textbf{Set}, $W = \big\{(x_1,x_2)|  x_1 , x_2 \in X ;  l(x_1) = h(x_2) \big\}$ with projections $h'(x_1,x_2) = x_1$ and $w(x_1,x_2) = x_2$. Similarly, $Q = \big\{(x,z)|  x \in X , z \in Y ;  l(x) = y(z) \big\}$ with projections $y'(x,z) = x$ and $l'(x,z) = z$.
So with respect to our notation, the elements of the set $W$ look like
$\xymatrix {a \ar@{-->}[r]^f & b \ar@{-->}[r]^g & c \ar@{-->}[r]^k & d }$ with  $r(f)$, $r(g)$, $r(k)$ $\in C$ and  the elements of $Q$ look like
 ($\xymatrix {a \ar@{-->}[r]^f & b \ar@{-->}[r]^g & c}$ , $\xymatrix{ v \ar@{-->}[r]^k & r(g)}$) with $r(f)$, $r(g)$ and $r(k)$ $\in C$.
So the 2-cell under consideration gives for $W \longrightarrow Q$ that 

$\xymatrix {a \ar@{-->}[r]^f & b \ar@{-->}[r]^g & c \ar@{-->}[r]^k & d }$  $\xymatrix{  \ar@{=>}[rr] && }$ $(\xymatrix {a \ar@{-->}[r]^f & b}$, $\xymatrix{ r(g) \ar@{-->}[r]^{k^g} & r(kg)}$, $\xymatrix{ b \ar@{-->}[r]^{kg} & d}$) 

Now observe that the two sides of the cube \eqref{mpent} act as in the diagram below,
\begin{equation*}
\begin{aligned}
\xymatrix{
& (gf,g^f,h) \ar@{|->}[rr] ^-{\phantom{A}\alpha (1\otimes p\otimes 1)}  &&  (gf,h^{gf},h(gf)) \ar@{|->}[d]^-{p(1\otimes \alpha)} \\
(f,g,h) \ar@{|->}[ru]^-{p(\alpha \otimes 1)\phantom{A}} \ar@{|->}[d]_-{\alpha(p\otimes 1 \otimes 1)} & & &  ((h^{gf})^{g^f},h^{gf}g^f,h(gf))  \\
(f,h^g,hg) \ar@{|->}[rd]_-{\cong} & & & (h^g,(hg)^f,(hg)f) &   \\
& (h^g,f,hg) \ar@{|->}[rur]_(.5){\alpha(1\otimes 1 \otimes p )  }} 
\end{aligned}
\end{equation*}
so the cube commutes if and only if the two expressions in the lower right corner agree; in other words, if the following equations, $h^g = (h^{gf})^{g^f}$, $(hg)^f = h^{gf}g^f$, and $(hg)f = h(gf)$ hold, for a composable triple of arrows. The remaining four equations are analyzed similarly, and the results summarized below.

\textbf{Summary:}
We summarize  all the calculations with respect to a skew monoidale into the notation introduced earlier to get:

\underline{For the 1-cell $\xymatrix{ p \colon  C\times C \ar[r]|(.65){\vert} & C}$ with vertex $E$} : For $f \in E$ , $\xymatrix{x \ar@{-->}[r]^f & y}$ for $x , y \in C$  and $r(f) \in C$.

\underline{For the 1-cell $\xymatrix{ j \colon  1 \ar[r]|(.53){\vert} & C}$ with vertex $U$} : For $u \in U$ that $j(u) \in C$.

\underline{For the 2-cell $\lambda$} : if $\xymatrix { j(u) \ar@{-->}[r]^f & y}$ then $y = r(f)$ in $C$.

\underline{For the 2-cell $\rho$} : for $x\in C$ we have $\xymatrix{ x \ar@{-->}[r]^{\phi_x} & x}$ in $E$ and $\psi_x \in U$ with $j({\psi_x}) =  r(\phi_x)$. 

\underline{For the 2-cell $\alpha$} :
if $\xymatrix {x \ar@{-->}[r]^f & y \ar@{-->}[r]^g & z }$ then  $\xymatrix{ r(f) \ar@{-->}[r]^{g^f} & r(gf) }$ and $\xymatrix{ x \ar@{-->}[r]^{gf} & z}$
are both in $E$ 

with $r(g^f)$ = $r(g)$.

\underline{For the equation between the 2-cells involving $(\lambda,\rho)$} : For $j(u) \in C$ we have 
${\psi}_{j(u)} = u$, 

that is, $\psi j = 1$.

\underline{For the equation between the 2-cells involving $(\rho,\alpha)$} : For  $\xymatrix{x \ar@{-->}[r]^f & y}$ we have

${\psi}_y = {\psi}_{r(f)}$, ${\phi}_y f = f$, and $ {\phi}^f_y = {\phi}_{r(g)}$. 

\underline{For the equation between the 2-cells involving $(\lambda,\alpha)$} : For $\xymatrix { j(u) \ar@{-->}[r]^f & y \ar@{-->}[r]^g & z }$ we

have $g^f = g$.

\underline{For the equation between the 2-cells involving $(\rho,\alpha,\lambda)$} : For  $\xymatrix{x \ar@{-->}[r]^f & y}$ we

have $f{\phi}_x = f$. 

\underline{For the equation between the 2-cells involving $(\alpha,\alpha)$ (the pentagon)} : 

For $\xymatrix {x \ar@{-->}[r]^f & y \ar@{-->}[r]^g & z \ar@{-->}[r]^h & a }$ we have
$(hg)f = h(gf)$ , $(hg)^f = h^{gf}g^f$, and
 $h^g = (h^{gf})^{g^f}$.

\vspace{.8cm}
 
We conclude that we can now safely rename $\phi_x$ as $1_x$ and change our notation for $\xymatrix{x \ar@{-->}[r]^f & y}$ to an arrow $\xymatrix{x \ar[r]^f & y}$ and with the condition that $(hg)f = h(gf)$ obtain a category with some extra structure consisting of :

\hspace{2cm}(a) for each morphism $f$ an object $r(f)$.

\hspace{2cm}(b) a set $U$ with a function $j$ from $U$ to the set of objects. 

\hspace{2cm}(c) for each composable pair $\xymatrix {x \ar[r]^f & y \ar[r]^g & z}$ a map  $\xymatrix{ r(f) \ar[r]^{g^f} & r(gf)}$ with $r(g^f) = r(g)$.

\hspace{2cm}(d) for each object $c$ an element $\psi_{c} \in U$.

\vspace{.4cm}

satisfying the following
\begin{eqnarray}
\textnormal{For} \xymatrix { u \in U} & \textnormal{that} & \psi j(u) = u. \label{11} \\
\textnormal{For} \xymatrix{x \in C} & \textnormal{that} & r(1_x) = j\psi. \label{12} \\
\textnormal{For} \xymatrix { j(u) \ar[r]^f & y \ar[r]^g & z } & \textnormal{that} & g^f = g. \label{13} \\
\textnormal{For} \xymatrix{x \ar[r]^f & y},r(f) \in C & \textnormal{that} & 1^f_y = 1_{r(f)}. \label{14} \\
\textnormal{For} \xymatrix{x \ar[r]^f & y},r(f) \in C & \textnormal{that} & \psi_y = \psi_{r(f)}. \label{15} \\
\textnormal{For} \xymatrix {x \ar[r]^f & y \ar[r]^g & z \ar[r]^h & a } & \textnormal{that} & (hg)^f = h^{gf}g^f. \label{16} \\
\textnormal{For} \xymatrix {x \ar[r]^f & y \ar[r]^g & z \ar[r]^h & a } & \textnormal{that} & h^g = (h^{gf})^{g^f}. \label{17} 
\end{eqnarray}

Before we consider these equations again, we notice that from \eqref{11} $j$ is already injective.

\begin{lem}
If j is surjective then $r = t$.
\end{lem}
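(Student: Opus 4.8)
The plan is to prove the statement directly: assuming $j$ surjective, I will show $r(f) = t(f)$ for every $f \in E$, the idea being to exhibit $f$ as the first leg of a composable pair whose source lies in the image of $j$, so that equation \eqref{13} becomes applicable.

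First I would fix $f \in E$, recorded as $\xymatrix{x \ar[r]^f & y}$ with $s(f) = x$, $t(f) = y$. By surjectivity of $j$ there is $u \in U$ with $j(u) = x$, so $f$ is equally recorded as $\xymatrix{j(u) \ar[r]^f & y}$. Next I would pick up the identity $1_y = \phi_y \in E$ furnished by the $\rho$-structure (so $s(1_y) = t(1_y) = y$, which equals $t(f)$) and form the composable triple $\xymatrix{j(u) \ar[r]^f & y \ar[r]^{1_y} & y}$.

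Applying the $\alpha$-structure to this triple produces $(1_y)^f \in E$ recorded as $\xymatrix{r(f) \ar[r]^{(1_y)^f} & r(1_y f)}$, so in particular $s\left((1_y)^f\right) = r(f)$; while equation \eqref{13}---available precisely because the source of $f$ has been arranged to be $j(u)$---gives $(1_y)^f = 1_y$. Comparing sources then yields $r(f) = s(1_y) = y = t(f)$. (Alternatively, equation \eqref{14} gives $(1_y)^f = 1_{r(f)}$, and combined with \eqref{13} this forces $1_y = 1_{r(f)}$, hence $y = r(f)$.) Since $f$ was arbitrary, $r = t$.

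I do not expect a genuine obstacle here: the content lies entirely in choosing the right instance of the summarised equations. The only delicate point is that \eqref{13} may be invoked only after the source of $f$ has been replaced by an element of the image of $j$, and this is exactly where the hypothesis that $j$ is surjective is used; everything else is bookkeeping with the maps $s$, $r$, $t$.
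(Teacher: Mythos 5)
Your proof is correct, but it takes a genuinely different route from the paper's. The paper's argument is a two-line one using only the \emph{existence} of the 2-cell $\lambda$: by \eqref{pull} the vertex $P$ of the composite $p(j\times 1)$ is the pullback of $s$ along $j$, so if $j$ is surjective then so is the projection $q\colon P\to E$ (every $f\in E$ has $s(f)=j(u)$ for some $u$), and the constraint \eqref{Dom}, $rq=tq$, then forces $r=t$. Element-wise this is exactly the summarised $\lambda$-condition ``if $j(u)\dashrightarrow^{f} y$ then $y=r(f)$'', which already finishes the proof once $j$ is surjective; you do not need to bring in composition at all. Instead you route the argument through the identities $\phi_y=1_y$ supplied by $\rho$, the $\alpha$-structure giving $(1_y)^f$ with source $r(f)$, and equation \eqref{13} (the $(\lambda,\alpha)$-compatibility) to conclude $(1_y)^f=1_y$ and hence $r(f)=s(1_y)=y=t(f)$; your parenthetical variant via \eqref{14} works too, provided one extracts $y=r(f)$ by comparing sources of $1_y$ and $1_{r(f)}$, as you do. The trade-off: your derivation stays entirely inside the summarised equational calculus and never returns to the pullback diagrams, but it invokes strictly more of the skew-monoidale structure (the unit 2-cell $\rho$, the associator, and a derived axiom) than the paper's proof, which uses only the $\lambda$-constraint plus the fact that pullbacks in $\textbf{Set}$ preserve surjections.
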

\begin{proof}
If $j$ is surjective then by \eqref{pull} $q$ is also surjective. Since $rq = tq$ by \eqref{Dom}, we can conclude that $r = t$.
\end{proof}
So with the assumption that $j$ is surjective we see that a skew monoidale in $\Span$ is precisely a category. The extra structure given by $\tau$ and the map $g^f$ reduces to $g^f = g$ for all $f, g \in E$ by \eqref{13}. This recovers the result in $\cite{SkMon}$ where the skew monoidale in $\Span$ assumed the unit was of the form
\begin{align*}
\xymatrix{
& C \ar[dl]_{!} \ar[dr]^1
\\ 1 && C}
\end{align*}

\section{A Characterisation}
\subsection{Coslice Category}
In this subsection we use the notation of \cite{CWM} to denote the coslice category or undercategory of a category, which we now define. 

Let $C$ be a category and $x$ an object of $C$, then the {\em coslice} category denoted by $(x \downarrow C)$ has {\em objects} the arrows of $C$ with source $x$, that is, $\xymatrix{x \ar[r]^f & y}$  which we sometimes denote by the pairs $(f,y)$; and {\em arrows} those $g \colon (f,y) \longrightarrow (f',z)$ where $\xymatrix{y \ar[r]^g & z}$ is an arrow of $C$ such that $f' = gf$, which we usually denote as $\xymatrix{(f,y) \ar[r]^g & (gf,z)}$. It is useful sometimes to write these arrows as the following triangles
\begin{equation*} 
\xymatrix{x \ar[d]_f \ar[dr]^{gf}
 \\ y \ar[r]_g & z}  
\end{equation*}
There is an evident functor $\Cod_{x} \colon (x \downarrow C) \longrightarrow C$ defined on objects by $\xymatrix{x \ar[r]^f & y} \longmapsto y$
and on arrows by $\xymatrix{(f,y) \ar[r]^g
& (gf,z)}$ $\longmapsto$ $\xymatrix{y \ar[r]^g & z}$.

\textbf{Note:} Let $A$ and $B$ be categories and $x$ an object of $A$.
For a functor $T \colon  A \longrightarrow B$ there is an induced functor
$\xymatrix{(x \downarrow  A) \ar[rr]^{(x \downarrow  T)} && (Tx \downarrow  B)}$ 
sending an object  $\xymatrix{x \ar[r]^f & y}$ to  $\xymatrix{Tx \ar[r]^{Tf} & Ty}$
and an arrow 
$$\vbox{ \xymatrix{x \ar[d]_f \ar[dr]^{gf} \\ y \ar[r]_g & z}}
\qquad \longmapsto \qquad
\vbox{ \xymatrix{Tx \ar[d]_{Tf} \ar[dr]^{T(gf)=T(g)T(f)}
    \\ Ty \ar[r]_{Tg} & Tz}} $$

Let $C$ be a category and $f \colon x \longrightarrow y$ be an object of $(x \downarrow C)$; we remind the reader of the coslice category $(f \downarrow (x \downarrow C))$. This category has as its objects the morphisms in $(x \downarrow C)$ starting at $f$ denoted by $\xymatrix{ f \ar[r]^g & gf}$ and as its morphisms the commuting triangles between its objects which we denote by 
\begin{equation*} 
\xymatrix{f \ar[d]_g \ar[dr]^{hg}
\\ gf \ar[r]_h & hgf}
\end{equation*} 
we sometimes denote them by $\xymatrix{g \ar[r]^h & hg}$.

The functor $(f \downarrow \Cod_{x})$ : $\xymatrix{(f \downarrow (x \downarrow C)) \ar[r] & (y \downarrow C)}$ is invertible; it sends an object
$\xymatrix{f \ar[r]^g & gf}$ to $g$ and a morphism
$$\vbox{ \xymatrix{f \ar[d]_g \ar[dr]^{hg}
\\ gf \ar[r]_h & hgf}}
\qquad \longmapsto \qquad
\vbox{ \xymatrix{\Cod(f) \ar[d]_g \ar[dr]^{hg}
\\ \Cod(gf) \ar[r]_h & \Cod(hgf)}} $$
 
\subsection{The Functor $R_x$}
From the previous sections we have seen that a skew monoidale $C$ in $\Span$ gives rise to a category $\mathbb C$ with some extra structure via the function $g^f$ and equations \eqref{11} - \eqref{17}. In this section we use some of these equations to obtain a functor from a coslice category of $\mathbb C$ to $\mathbb C$ and relate the remaining equations to this functor.

For $x \in \mathbb C$ we use equations \eqref{14} and \eqref{16} to {\em define} a functor $R_x \colon (x \downarrow \mathbb C) \longrightarrow \mathbb C$ 
sending an object $\xymatrix{x \ar[r]^f & y}$ to $r(f)$ and an arrow  $\xymatrix{(f,y) \ar[r]^g
& (gf,z)}$ to $\xymatrix{r(f) \ar[r]^{g^f}
 & r(gf)}$.
When it is clear in context we write that on the objects $R_x(f) = r(f)$ and on the arrows $R_x(g) = g^f$.
We check that we do have a functor.
 
We have by definition that $R_x(hg) = (hg)^f$ and $R_x(h)R_x(g) = h^{gf}g^f$ and by \eqref{16} these agree so that $R_x$ preserves composition. Similarly by \eqref{14}, $R_x$ preserves identities and so is a functor.

We now express equation \eqref{17} in terms of the functor $R_x$. However for the benefit of the reader we will explicitly describe the functor 
$(f \downarrow R_x f)$ :$\xymatrix{(f \downarrow (x \downarrow \mathbb C)) \ar[r] & (R_x f \downarrow \mathbb C)}$ which is defined on objects by
$\xymatrix{f \ar[r]^g & gf} \longmapsto \xymatrix{r(f) \ar[r]^{g^f} & r(gf)}$ and on arrows  by 
$$\vbox{ \xymatrix{f \ar[d]_g \ar[dr]^{hg}
\\ gf \ar[r]_h & hgf}}
\qquad \longmapsto \qquad
\vbox{ \xymatrix{r(f) \ar[d]_{g^f} \ar[dr]^{h^{gf} g^f = (hg)^f} \\ r(gf) \ar[r]_{h^{gf}} & r(hgf)}} $$
The above remark allows us to conclude that equation \eqref{17} asserts that the following diagram commutes (it agrees on objects since $r(g^f) = r(g)$).
\begin{equation}\label{factor}
\begin{aligned}
\xymatrix{ (y \downarrow \mathbb C) \ar[rr]^{R_y} && \mathbb C
\\ \\ (f \downarrow (x \downarrow \mathbb C)) \ar[uu]^{(f \downarrow Cod_{x})} \ar[rr]_{(f  \downarrow  R_x f)} && (R_x f \downarrow \mathbb C) \ar[uu]_{R_{R_xf} }}
\end{aligned}
\end{equation}

In the following section we consider the remaining structure involving $U$, $j$, and $\psi$.
\newpage 

\subsection{The Function $E$}

We {\em define} a function $E$ on the set of objects of the category $\mathbb C$ by $E(x) = r(1_x)$. Using \eqref{14} and $r(g^f) = r(g)$ (for a composable pair of morphisms), we note that if $\xymatrix{x \ar[r]^f & y }$ then $E(r(f)) =E(y)$. Taking $f = 1_x$ we find that $E(E(x)) = E(r(1_x)) = E(x)$, so $E$ is idempotent.

From equation \eqref{11}, $\psi j = 1$, and equation \eqref{12}, $j \psi_{x} = r(1_x)$, we can define $U$, $j$, and $\psi$ as a splitting of $E$. So in terms of the functor $R_x$ we have $E(x) = R_x(1_x)$ for each object $x$ in the category $\mathbb C$. With this notation,  equation \eqref{15} then asserts that the following diagram commutes on the objects of the respective categories: 
\begin{equation}\label{ee}
\xymatrix{ Ob(x \downarrow \mathbb C) \ar[rr]^{R_x} \ar[dd]_{Cod_{x}} && Ob(\mathbb C) \ar[dd]^{E}
\\ \\ Ob(\mathbb C) \ar[rr]_{E} && Ob(\mathbb C)}
\end{equation}
Following an object $\xymatrix{x \ar[r]^f & y}$ of $(x \downarrow \mathbb C)$ around \eqref{ee} then asserts in terms of the functor $R_x$ that $R_y(1_y) = R_{R_x f}(1_{R_x f})$ and as $R_x$ is a functor we also have $R_{R_x f}(1_{R_x f}) = R_{R_x f}(R_x(1_y))$.

However if we follow the object $\xymatrix{y \ar[r]^{1_y} & y}$ of $(y \downarrow \mathbb C)$ around \eqref{factor} (really we follow $\xymatrix{f \ar[r]^{1_y} & 1_y f}$ of  $(f \downarrow (x \downarrow \mathbb C))$ around \eqref{factor})  we get that $R_y(1_y) = R_{R_x f}(R_x(1_y))$. So we have shown:
\begin{lem}
If \eqref{factor} holds then so does \eqref{ee}.
\end{lem}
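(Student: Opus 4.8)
The plan is a short diagram chase on objects: I want to show that equation \eqref{ee} is nothing but the restriction of the commuting square \eqref{factor} to one carefully chosen object. First I would isolate the pointwise content of each square. Equation \eqref{ee} concerns objects only: for an object $\xymatrix{x\ar[r]^f & y}$ of $(x\downarrow\mathbb C)$, the right-then-down path sends it to $E(R_x f)=E(r(f))$ while the down-then-right path sends it to $E(\Cod_x f)=E(y)$, so \eqref{ee} commuting amounts to the equality $E(r(f))=E(y)$ for every such $f$. Using the defining formula $E(z)=R_z(1_z)$ together with $R_x f = r(f)$, this reads $R_{R_x f}(1_{R_x f})=R_y(1_y)$ — exactly the expression singled out in the paragraph preceding the lemma.

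Next I would pick the object of $(f\downarrow(x\downarrow\mathbb C))$ that detects this. Since the functor $(f\downarrow\Cod_x)$ is invertible (as established when the coslice categories were introduced), the object $\xymatrix{y\ar[r]^{1_y} & y}$ of $(y\downarrow\mathbb C)$ is the image under $(f\downarrow\Cod_x)$ of the object $\xymatrix{f\ar[r]^{1_y} & 1_y f}$, that is, of the identity of $f$ inside $(x\downarrow\mathbb C)$. Chasing this object around \eqref{factor}: the up-then-across composite $R_y\circ(f\downarrow\Cod_x)$ outputs $R_y(1_y)$, whereas along the bottom $(f\downarrow R_x f)$ sends it to the object $\xymatrix{r(f)\ar[r]^{1_y^f} & r(1_y f)}$ of $(R_x f\downarrow\mathbb C)$, which by \eqref{14} (equivalently, by functoriality of $R_x$ on identities) equals $\xymatrix{r(f)\ar[r]^{1_{r(f)}} & r(f)}$, and then $R_{R_x f}$ carries this to $R_{R_x f}(1_{R_x f})=E(r(f))$. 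Commutativity of \eqref{factor} therefore forces $R_y(1_y)=E(r(f))$, which is precisely the pointwise assertion of \eqref{ee} isolated above. Since $x$ was arbitrary and $f$ ranges over all objects of $(x\downarrow\mathbb C)$, equation \eqref{ee} holds.

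I do not expect any real obstacle; the argument is entirely bookkeeping. The two places that need a little care are the identification of the preimage of $1_y$ under $(f\downarrow\Cod_x)$, which is where invertibility of $(f\downarrow\Cod_x)$ is used, and the evaluation of $(f\downarrow R_x f)$ on that object, which needs $R_x(1_y)=1_{r(f)}$, i.e.\ equation \eqref{14}; everything else is just unwinding the definitions of the coslice categories and of the functors $R_x$, $(f\downarrow\Cod_x)$ and $(f\downarrow R_x f)$. It is worth noting that only this ``easy'' direction is claimed: \eqref{ee} sees objects alone, whereas \eqref{factor} additionally constrains morphisms, so the converse implication is strictly stronger and is not asserted here.
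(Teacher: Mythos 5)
Your proof is correct and follows essentially the same route as the paper: both reduce \eqref{ee} on an object $f\colon x\to y$ to the equality $R_y(1_y)=R_{R_xf}(1_{R_xf})$, then chase the object $\xymatrix{f \ar[r]^{1_y} & 1_yf}$ of $(f\downarrow(x\downarrow\mathbb C))$ around \eqref{factor}, using \eqref{14} (functoriality of $R_x$ on identities) to identify $1_y^f$ with $1_{R_xf}$. The only difference is presentational: you make explicit the use of the invertibility of $(f\downarrow\Cod_x)$ to locate the relevant object, which the paper leaves implicit.
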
  

We now consider the remaining equation \eqref{13} in terms of the functor $R_x$. It is the statement that if for $\xymatrix {j(u) \ar[r]^f & y \ar[r]^g & z }$ then  $g^f = g$.

As $\psi j = 1$ it can be shown that $x = j\psi x$ if and only if there exist a $u$ such that $x = ju$. So for the $u$ where $x = ju$ then $x = E(x) = R_x(1_x)$ (We could now define $U$ to be those $x$ for which $x = R_x(1_x)$). So we conclude that \eqref{13} is the statement that if $x = R_x(1_x)$ then $R_x = \Cod_{x}$.

\textbf{Conclusion:} A skew monoidale $C$ in $\Span$ amounts to a category $\mathbb C$ with

\hspace{2cm}(a) a functor $R_x \colon (x \downarrow \mathbb C) \longrightarrow \mathbb C$ for each $x$ in $\mathbb C$.

\hspace{2cm}(b) if $x = R_x(1_x)$ then $R_x = \Cod_{x}$. 

\hspace{2cm}(c) $R_x$ satisfies \eqref{factor}, that is, for an arrow $\xymatrix{x \ar[r]^f & y}$ in $\mathbb C$ the following commutes
\begin{equation*}
\xymatrix{ (y \downarrow \mathbb C) \ar[rr]^{R_y} && \mathbb C
\\ \\ (f \downarrow (x \downarrow \mathbb C)) \ar[uu]^{(f \downarrow Cod_{x})} \ar[rr]_{(f  \downarrow  R_x f)} && (R_x f \downarrow \mathbb C) \ar[uu]_{R_{R_xf} }}
\end{equation*}

\textbf{Note:} For each $x \in C$, the case when $j = 1$ (equivalently, $j$ is surjective) corresponds to $R_x = \Cod_{x}$.

\subsection{The Simplicial category and the Decalage Functor}

We recall some standard facts about the simplex category $\mathbf{\Delta}$, before using it in our characterisation. There are many references for this section we use \cite{CWM} and \cite{Dus}.

The simplicial category $\mathbf{\Delta}$ has as objects the finite ordinals $\textbf{n} = \{0,1,\dots ,n-1\}  $ and morphisms the order-preserving functions $\xi : \mathbf{m} \longrightarrow \mathbf{n}$ with composition that of functions; the composite of order preserving functions is again order preserving. We note that the ordinal numbers $\textbf{0}$ and $\textbf{1}$ are respectively, initial and terminal objects in $\mathbf{\Delta}$.

If $0\le i\le n$, we write $\delta_i\colon\textbf{n} \to\textbf{n+1}$ for the injective order-preserving function where $\delta_i(k)$ is equal to $k$ if $k<i$ and $k+1$ otherwise (thus its image omits $i$). Similarly, if $0\le i\le n-1$, we write $\sigma_i\colon\textbf{n+1}\to\textbf{n} $ for the order-preserving surjective function where $\sigma_i(k)$ is equal to $k$ if $k\le i$ and $k-1$ otherwise (thus $\sigma_i(i)=\sigma_i(i+1)$, that is, it repeats $i$). We call these maps coface and codegeneracy maps respectively and they satisfy the well known simplicial identities which allow for a presentation of $\mathbf{\Delta}$ with the $\delta_i$ and $\sigma_i$ as its generators and the simplicial identities as its relations.
Moreover, $\mathbf{\Delta}$ has a strict (non-symmetric) monoidal structure $(\mathbf{\Delta}, +,\mathbf{0})$ given by ordinal addition $+ \colon \mathbf{\Delta} \times \mathbf{\Delta} \longrightarrow \mathbf{\Delta}$, defined on ordinals as the ordered sum and on arrows by placing them "side by side". So in terms of the presentation we have that $1_m + \delta_i = \delta_{m+i}$, $1_m + \sigma_i = \sigma_{m+i}$, $\delta_i + 1_m = \delta_i$, and $\sigma_i + 1_m = \sigma_i$ where $1_m$ denotes the identity on $\mathbf{m}$; see \cite{CWM}.
 
A simplicial set is a contravariant functor from $\mathbf{\Delta}$ to \textbf{Set}. The category \textbf{Simp} of simplicial sets and simplicial maps between them is  defined to be the functor category $[\mathbf{\Delta^{op}},\textbf{Set}]$. For a functor $S \colon \mathbf{\Delta^{op}} \longrightarrow \textbf{Set}$ we write $S_n$ for $S(\textbf{n})$. It can be shown that the data for a simplicial set can be specified by the sets $S_n$ and maps $d_i \colon S_n \longrightarrow S_{n-1}$ and $s_i \colon S_n \longrightarrow S_{n-1}$ where for $0\le i\le n$ we define $d_i$ as $S\delta_i$ and $s_i$ as $S\sigma_i$. We call these face and degeneracy maps and they satisfy relations dual to those in $\mathbf{\Delta}$.

For a simplicial set $S$ we consider the shift or (left) decalage functor $\Dec \colon \textbf{Simp} \longrightarrow \textbf{Simp}$ which removes the 0-th face and degeneracy maps, shifts dimension so that $(\Dec(S))_n = S_{n+1}$ and shifts indices on the remaining face and degeneracy maps down by 1 so that $d_i \colon (\Dec(S))_n \longrightarrow (\Dec(S))_{n-1}$ is $d_{i+1} \colon S_{n+1} \longrightarrow S_n$ and $s_i \colon (\Dec(S))_n \longrightarrow (\Dec(S))_{n+1}$ is $s_{i+1} \colon S_{n+1} \longrightarrow S_{n+2}$. Given a simplicial set $S$ as in the diagram
\[ S :\qquad \xymatrix{ 
**[l]\hdots\hdots S_n \ar@/^4pc/[r]^{d_n}_\vdots 
\ar@/^1pc/[r]^{d_0} & **[r]S_{n-1}\ar@/^1pc/[l]^{s_{n-1}}\ar@/^4pc
/[l]^{s_0}_\vdots & **[l] \hdots\hdots S_2 \ar@/^3pc
/[r]^{d_2}\ar@/^2pc
/[r]^{d_1} \ar@/^1pc
/[r]^{d_0}& **[r]S_1 \ar@/^2pc
/[l]^{s_0} \ar@/^1pc
/[l]^{s_1} \ar@/^2pc
/[r]^{d_1}\ar@/^1pc
/[r]^{d_0}& **[r]S_0 \ar@/^1pc
/[l]^{s_0} } \] 
The decalage $\Dec(S)$ of $S$ is the simplicial set
\[ \Dec(S) :\qquad \xymatrix{ 
**[l]\hdots\hdots S_{n+1} \ar@/^4pc/[r]^{d_{n+1}}_\vdots 
\ar@/^1pc/[r]^{d_1} & **[r]S_{n}\ar@/^1pc/[l]^{s_n}\ar@/^4pc
/[l]^{s_1}_\vdots & **[l] \hdots\hdots S_3 \ar@/^3pc
/[r]^{d_3}\ar@/^2pc
/[r]^{d_2} \ar@/^1pc
/[r]^{d_1}& **[r]S_2 \ar@/^2pc
/[l]^{s_1} \ar@/^1pc
/[l]^{s_2} \ar@/^2pc
/[r]^{d_2}\ar@/^1pc
/[r]^{d_1}& **[r]S_1 \ar@/^1pc
/[l]^{s_1} } \] 

There is a simplicial map $\Dec(S) \longrightarrow S$ given (in degree $n$) by the original face map that was discarded $d_0 \colon S_{n+1} \longrightarrow S_n$; we write this map as $d_0 \colon \Dec(S) \longrightarrow S$. The above explicit description of the left decalage construction has a right version and left and right versions for the case of augmented simplicial sets. 

There is a comonad underlying the decalage functor which we briefly describe. Since $\mathbf{1}$ is terminal in $\mathbf{\Delta}$, the arrows $\delta_0 \colon \mathbf{0} \longrightarrow \mathbf{1}$ and $\sigma_0 \colon \mathbf{2} \longrightarrow \mathbf{1}$ form the "universal" monoid $(\mathbf{1},\sigma_0,\delta_0)$ in $\mathbf{\Delta}$. Moreover, there is a monad on $\mathbf{\Delta}$ with endofunctor part $ - + \mathbf{1} \colon \mathbf{\Delta} \longrightarrow \mathbf{\Delta}$, multiplication $- + \sigma_0$, and unit $- + \delta_0$. Now, by reversing the order of each ordinal, $\mathbf{\Delta^{op}}$ contains the universal comonoid $\mathbf{1}$, and as a result we can form a comonad $- + \mathbf{1} \colon \mathbf{\Delta^{op}} \longrightarrow \mathbf{\Delta^{op}}$ dual to the previous construction. This induces by precomposition with the above comonad a comonad on $\mathbf{Simp}$ whose underlying endofunctor is $\Dec \colon \textbf{Simp} \longrightarrow \textbf{Simp}$. The counit of the comonad $\Dec$ is $d_0 \colon S_{n+1} \longrightarrow S_n$. 

Given a category $C$ we can form the nerve $N(C)$ of $C$, it is the well known simplicial set where the face and degeneracy maps are those given in \cite{CWM} and where 
 
\hspace{2.5cm}$N(C)_0$ = set of objects in $C$

\hspace{2.5cm}$N(C)_1$ = set of morphisms in $C$

\hspace{2.5cm}$N(C)_2$ = set of composable pairs of morphisms in $C$

\hspace{4cm}\vdots

\hspace{2,5cm}$N(C)_n$ = set of composable $n$-tuples of morphisms in $C$.

With the above discussion in mind we see that if $C$ is a category then so is $\Dec(C)$  where

\hspace{2.5cm}$\Dec(C)_0$ = set of morphisms in $C$

\hspace{2.5cm}$\Dec(C)_1$ = set of composable pairs of morphisms in $C$

\hspace{4.2cm}\vdots

\hspace{2.5cm}$\Dec(C)_{n}$ = set of composable $(n+1)$-tuples of morphisms in $C$.

Recall that in the category \textbf{Cat} of small categories and functors, the coproduct of a family of categories is their disjoint union. For $I$ a set and $(C_{i})_{i \in I}$ a family of objects in \textbf{Cat} we write $\coprod_{i \in I} C_{i}$ for the coproduct of the family $(C_{i})_{i \in I}$. Now with this notation and from the functors $\Cod_{x}$ we can form a functor from  $\coprod_{x \in C} (x \downarrow C)$ to $C$, which we denote by $\Cod$.

Having described above what the functor $\Dec$ does on objects of \textbf{Cat} we notice for a category $C$, that $\Dec(C) = \coprod_{x \in C} (x \downarrow C)$. So to complete this (brief) description of $\Dec$ as an endofunctor from \textbf{Cat} we need to describe what it does on arrows of \textbf{Cat}.

Let $F \colon X \longrightarrow C$ be a functor where $X$ and $C$ are categories. As we need a functor from a coproduct in \textbf{Cat}, it is sufficient, for each $x \in X$, to specify a functor from $(x \downarrow X)$ to $\Dec(C)$ where $\Dec(C) = \coprod_{c \in C} (c \downarrow C)$. We define the functor $\Dec(F)_{x} \colon (x\downarrow X) \longrightarrow \Dec(C)$ by the following composite
\begin{align*}
\xymatrix{(x \downarrow X) \ar[rr]^{(x \downarrow F)}
 && (F(x)\downarrow C) \ar[rr]^{inclusion} && \Dec(C)}
\end{align*}
Thus we have a functor $\Dec(F) \colon \Dec(X) \longrightarrow \Dec(C)$. 

We complete this description of $\Dec$ as an endofunctor on \textbf{Cat} with the observation that $\Cod \colon \Dec(C) \longrightarrow C$ is the map $d_0 \colon \Dec(C) \longrightarrow C$ and note that any coslice category can be extracted from $\Dec(C)$ using this $d_0$.

Using these constructions we can rewrite the previous description of a skew monoidale in $\Span$ as:

\textbf{Conclusion:} A skew monoidale $C$ in $\Span$ amounts to a category $\mathbb C$ with

\hspace{2cm}(a) a functor $R \colon \Dec(\mathbb C) \longrightarrow \mathbb C$, where

\hspace{2cm}(b) $R$ makes the following diagram  commute
 
\begin{equation*}
\begin{aligned}
\xymatrix{ \Dec(\mathbb C) \ar[rr]^R && \mathbb C
\\ \\ \Dec(\Dec(\mathbb C)) \ar[rr]_{\Dec(R)} \ar[uu]^{\Dec(\Cod)} && \Dec(\mathbb C) \ar[uu]_{R}}
\end{aligned}
\end{equation*}

\hspace{2cm}(c) such that, if $x = R(1_x)$ then $R_x = \Cod_x$. 

Note that when starting with just a category then $R = Cod$.

\section{An Example}
In this section we denote by $(M,\mu,\eta)$ or just $M$ a monoid in the monoidal category $(\textbf{Set},\times,1)$ where the tensor product is the cartesian product $\times$ and $1=\{\star \}$ denotes a one point set as its unit. Here the two arrows $\mu$ and $\eta$ in $\textbf{Set}$ satisfy the usual equations (see \cite{CWM}). For $\mu \colon M \times M \longrightarrow M$ and for $a,b \in M$ we write $\mu(a,b)= a.b$ and write for $\eta(\{\star \}) = 1_M$, we sometimes just write $\eta(\{\star \}) = 1$ where it should be clear in context what $1$ represents.

We recall the embedding $(-)_{\star}
\colon \textbf{Set} \longrightarrow \Span$ which is the identity on objects and assigns to the morphism $f \colon A \longrightarrow B$ the following span 
\begin{align*}
f_{\star} = \xymatrix{& A \ar[dl]_{1_{A}} \ar[dr]^f
\\ A && B}
\end{align*}
In fact, this is a strong monoidal pseudofunctor and as a consequence sends monoids in \textbf{Set} to monoidales in $\Span$.
We can therefore consider a monoid $(M,\mu,\eta)$ in \textbf{Set} as a (skew) monoidale in $\Span$.  

The 1-cell  $\xymatrix{ p \colon  C\times C \ar[r]|(.65){\vert} & C}$ for a skew monoidale in $\Span$ is given by
\begin{align*}
\xymatrix{ & M\times M \ar[dl]_{(\pi_{1},\pi_{2})} \ar[dr]^{\mu}
\\ M\times M && M}
\end{align*}
where $\pi_{i} \colon M\times M \longrightarrow M$ is defined by $\pi_{i}(m_1,m_2) = m_i$ for $i$=1,2 and $m_1,m_2 \in M$.

The 1-cell $\xymatrix{ j \colon  1 \ar[r]|(.53){\vert} & C}$ for a skew monoidale in $\Span$ is given by
\begin{align*}
\xymatrix{ & 1 \ar[dl]_1 \ar[dr]^{\eta}
\\ 1 && M}
\end{align*}
With these choices for $p$ and $j$, the 2-cell $\rho \colon  1 \Longrightarrow p(1 \times j)$ for this skew monoidale is given by the following diagram 
\begin{align*}
\xymatrix{ && M \ar@{=>}[d]^{\rho} \ar@/_1.8pc/[dddll]_1 \ar@/^1.2pc/[ddr]_{(1,\eta)} \ar@/_1.2pc/[ddl]^{(1,!)}
\ar@/^1.8pc/[dddrr]^1
\\ && M\times 1 \ar[dr]_{1 \times \eta} \ar[dl]^{(\pi_{1},\pi_{2})}
\\ & M \times 1 \ar[dr]_{1 \times \eta} \ar[dl]^{\pi_1} && M\times M \ar[dr]_{\mu} \ar[dl]^{(\pi_{1},\pi_{2})}
\\ M  && M\times M  && M}
\end{align*}

and the 2-cell $\alpha \colon  p(p \times 1) \Longrightarrow p(1 \times p)$ is given by
\begin{align*}
\xymatrix{ && M\times M\times M \ar@{=>}[d]^{\alpha=1} \ar@/_3pc/[dddll]_{(\pi_{1},\pi_{2},\pi_{3})} \ar@/^1.2pc/[ddr]^{1\times \mu} \ar@/_1.2pc/[ddl]_{(\pi_{1},\pi_{23})}
\ar@/^3pc/[dddrr]^{\mu(\mu \times 1)}
\\ && M\times M\times M \ar[dr]_{1\times \mu} \ar[dl]^{(\pi_{1},\pi_{23})}
\\ & M \times M\times M \ar[dr]_{1\times \mu} \ar[dl]^{1\times (\pi_{1},\pi_{2})} && M\times M \ar[dr]_{\mu} \ar[dl]^{(\pi_{1},\pi_{2})}
\\ M\times M\times M  && M\times M  && M}
\end{align*}
where $\pi_{23} \colon M\times M\times M \longrightarrow M\times M$ is defined as $\pi_{23}(m_{1},m_{2},m_{3}) = (m_{2},m_{3})$. 
We will now describe the resulting monoidale in terms of the characterisation of skew monoidales in $\Span$ given in the previous sections.

So with these choices for $p$ and $j$, $M$ is a category whose {\em objects} are the elements of the set $M$ and whose {\em arrows} are the pairs $(a,b) \in M\times M$ with source $\pi_{1}(a,b) = a$ and target $\mu(a,b) = a.b$ which we represent as $\xymatrix{a \ar[r]^{b} & a.b}$.
The composition of arrows in $M$ and the functor $R \colon \Dec(M) \longrightarrow M$ are both defined by the 2-cell  $\alpha \colon  p(p \times 1) \Longrightarrow p(1 \times p)$. The composition of arrows in $M$ is then given by
$\xymatrix{(a,b,c) \ar@{|->}[rr]^{1\times \mu} && (a,b.c)}$ for $(a,b,c) \in M\times M\times M$ and so the composite
$\xymatrix{a \ar[r]^b & a.b \ar[r]^(.4)c & (a.b).c}$ is given by  $\xymatrix{ a \ar[r]^(.3){b.c} & a.(b.c) }$.

For the functor $R \colon \Dec(M) \longrightarrow M$ and the $p$ and $j$ chosen from $M$ we have on the objects of $\Dec(M)$ that $R((a,b)) = \pi_{2}(a,b) = b$ or $R(\xymatrix{a \ar[r]^b & a.b}) = b$ and on the arrows of $\Dec(M)$ we have that $R((a,b,c)) = \pi_{23}(a,b,c) = (b,c)$ or $R(\xymatrix{a \ar[r]^b & a.b \ar[r]^(.4)c & (a.b).c}) = \xymatrix{b \ar[r]^(.4)c & b.c}$.

The identity arrow for the category $M$ exists via the 2-cell  $\rho \colon  1 \Longrightarrow p(1 \times j)$ and is represented as $\xymatrix{a \ar[r]^(0.3){1} & a.1=a}$.

\begin{remark}
The monoids in $\textbf{Set}$ constitute a category $\textbf{Mon}$ and the above example defines the object part of a functor $T \colon \textbf{Mon} \longrightarrow \textbf{Cat}$. For a morphism of monoids $f \colon (M,\mu,\eta) \longrightarrow (M',\mu',\eta')$ the induced functor $TM \longrightarrow TM^{'}$ sends an object $m$ to $fm$ and a morphism $(m,n)$ to $(fm,fn)$. 
\end{remark}

\begin{remark}
Considering a category as a partial monoid and using the notation of \cite{CWM} we can generalize the above example; we can instead start with a (small) category $C$ where $O$, $A$ and $A \times_{O} A$ respectively denotes the sets of objects, arrows and composable arrows of $C$.  

The tensor for a monoidale in $\Span$ is given by
\begin{align*}
\xymatrix{ & A\times_{O} A \ar[dl]_{(\pi_{1},\pi_{2})} \ar[dr]^{comp}
\\ A\times A && A}
\end{align*}

The unit for that monoidale in $\Span$ is given by
\begin{align*}
\xymatrix{ & O \ar[dl]_{!} \ar[dr]^{id}
\\ 1 && A}
\end{align*} 
\end{remark}

\begin{remark}
The following observations are due to Joachim Kock who has noted that for the example starting with a monoid $M$ the above construction of a (skew) monoidale is just the category $\Dec(M)$ and the functor $T \colon \textbf{Mon} \longrightarrow \textbf{Cat}$ is the restriction of the functor $\Dec \colon \textbf{Cat} \longrightarrow \textbf{Cat}$. Similarly, the example starting with a category $C$, the corresponding construction of a (skew) monoidale is $\Dec(C)$. Now starting with a category $C$ we consider the category $D = \Dec(C)$ so now using our previous notation $D_0 = C_1$, $D_1 = C_2$ and so on. The extra structure required on this category $D$ to be considered as a skew monoidale in $\Span$ is a functor $R \colon \Dec(D) \longrightarrow D$ which amongst other conditions agrees with the counit $d_0$ (that is with $\Cod \colon \Dec(D) \longrightarrow D$) on the objects $U$ in $D$ but now since $D = \Dec(C)$ this $d_0$ from $D$ is $d_1$ from $C$. Now a natural choice for the functor $R$ would be $\Dec(d_0)$ where $d_0$ is from $C$ and to define $U$, (which are the objects $x$ in $D$ for which $x = R_x(1_x)$ and since $C_1=D_0$) we can use $\Dec(s_0)$ where $s_0 \colon C_0 \longrightarrow C_1$ is from $C$, previously omitted by $\Dec$. Hence, for the above examples, the extra face map $R$ is available naturally resulting in the monoidale being non-skew.
\end{remark}
\begin{remark}
The following is a non-trivial example given by Stephen Lack at a talk to the Australian Category Seminar \cite{Ltalk}.
Batanin and Markl in \cite{Bat} define a strict operadic category as a category $\mathbb C$ equipped with a cardinality functor into \textbf{sFSet}, the skeletal category of finite sets, where each connected component of $\mathbb C$ has a chosen terminal object. One of the axioms for a strict operadic category requires the existence of a family of functors from a slice category of $\mathbb C$ into $\mathbb C$, for the chosen terminal object this is required to be the domain functor. Lack has shown that strict operadic categories are equivalent to left normal skew monoidales in $\Span([\mathbb{N},\textbf{Set}])$. 
Here $\mathbb{N}$ denotes the set of natural numbers, seen as a discrete category, and the functor category $[\mathbb{N},\textbf{Set}]$ is given a monoidal structure via Day's convolution. 
\end{remark}

\subsubsection*{\textbf{Acknowledgements:}}
I wish to thank my supervisor, Stephen Lack, whose patience and guidance made possible the writing of this article and Joachim Kock for his comments.

\bibliographystyle{amsplain}

\end{document}